\documentclass[11pt,letterpaper]{article}
\usepackage{amsfonts, amsmath, amssymb, amscd, amsthm, color, graphicx, mathrsfs, wasysym, setspace, mdwlist}
\usepackage{hyperref}
\hoffset -1.35cm \voffset -1.3cm \textwidth=6in \textheight=8in
\tolerance=9000 \emergencystretch=5pt \vfuzz=2pt
\parskip=1.3mm

\newcommand{\G}{\mathcal G}

\newcommand{\AH}{\mathcal{AH}}

\newcommand{\e}{\varepsilon}

\renewcommand{\d}{{\rm d}}

\newtheorem{thm}{Theorem}
\newtheorem{cor}[thm]{Corollary}
\newtheorem{lem}[thm]{Lemma}

\newtheorem{q}[thm]{Question}

\theoremstyle{definition}
\newtheorem{defn}[thm]{Definition}
\theoremstyle{remark}

\newfont{\eufm}{eufm10}

\begin{document}

\title{Erratum to the paper ``Acylindrical hyperbolicity of groups acting on trees"}

\author{A. Minasyan, D. Osin}

\date{}

\maketitle

In Lemma 3.9 of \cite{MO}, we stated that acylindrical hyperbolicity of a group is invariant under commensurability up to finite kernels (for definitions and background material we refer to \cite{MO} and \cite{Osi16b}). This lemma was used to prove some of the main results in \cite{MO} and the subsequent paper \cite{Osi16b}. Unfortunately, its proof contains a gap. The goal of this erratum is to point out the gap and correct the statements and proofs of results of \cite{MO} and \cite{Osi16b} affected by it. The arXiv versions of the papers \cite{MO} and \cite{Osi16b} will be updated accordingly.

\paragraph{1.1. The gap.}
The arguments given in \cite{MO} correctly prove some parts of Lemma 3.9, which are summarized below.

\begin{lem}\label{39}
Let $H$ be an acylindrically hyperbolic group. Suppose that $G$ is a finite index subgroup of $H$, or a quotient of $H$ modulo a finite normal subgroup, or an extension of $H$ with finite kernel. Then $G$ is also acylindrically hyperbolic.
\end{lem}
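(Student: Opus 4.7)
The three cases are essentially independent and I would handle them separately, as each admits a proof of a different flavour. The first two are direct. For a finite-index subgroup $G \leq H$, fix a non-elementary acylindrical action of $H$ on a hyperbolic space $X$ with two independent loxodromic elements $h_1, h_2$. The restriction of this action to $G$ is still acylindrical on $X$ (with the same constants), and suitable powers $h_1^k, h_2^k$ lie in $G$ and remain independent loxodromics, so $G$ is acylindrically hyperbolic. For an extension $1 \to K \to G \to H \to 1$ with $K$ finite, let $G$ act on $X$ through the epimorphism to $H$. Then the set $\{g \in G : \d(gx, x), \d(gy, y) \leq \e\}$ is the full $G$-preimage of the corresponding $H$-set and is at most $|K|$ times as large, which gives acylindricity; any lift of a loxodromic element acts on $X$ through its image, hence traces the same quasi-geodesic orbit and is loxodromic, and independence lifts as well.

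The substantial case is the quotient $G := H/N$ by a finite normal subgroup $N \triangleleft H$, with projection $\pi \colon H \to G$. By Osin's Cayley-graph characterization of acylindrical hyperbolicity, fix a (possibly infinite) generating set $X$ of $H$ such that $\Gamma(H,X)$ is hyperbolic and the left $H$-action on it is acylindrical and non-elementary. After replacing $X$ by $X \cup N$, a bounded perturbation that preserves both hyperbolicity and acylindricity of the action, we may assume $N \subseteq X$, so each $N$-coset has diameter at most $1$ in $\Gamma(H,X)$. Set $Y := \pi(X) \setminus \{1\}$, which generates $G$. The induced map $\bar\pi \colon \Gamma(H,X) \to \Gamma(G,Y)$ is surjective, $G$-equivariant, $1$-Lipschitz, and has fibres of diameter at most $1$; hence $\bar\pi$ is a quasi-isometry and $\Gamma(G,Y)$ is hyperbolic.

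To finish the quotient case I would verify that the action $G \curvearrowright \Gamma(G,Y)$ inherits both acylindricity and the existence of independent loxodromics. For acylindricity one lifts paths: a path of length $\ell$ in $\Gamma(G,Y)$ lifts edge by edge to a path of length at most $\ell$ in $\Gamma(H,X)$ ending at a prescribed fibre, so any $g \in G$ moving a chosen pair of points by at most $\e$ forces each of its $|N|$ lifts in $H$ to move a corresponding lifted pair by at most $\e+1$; the $H$-acylindricity bound then controls the cardinality of the $G$-set up to the factor $|N|$. The image $\pi(h) \in G$ of a loxodromic $h \in H$ has infinite order because $N$ is finite, and $\bar\pi$ takes its orbit to a quasi-geodesic in $\Gamma(G,Y)$; independence of two loxodromics descends likewise.

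The main obstacle is precisely the acylindricity verification for $G$ in the quotient case: one must track carefully the $N$-ambiguity both in lifting endpoints and in lifting geodesic paths through $\bar\pi$, so that the additive constant $1$ coming from the diameter of the fibres does not accumulate. This bookkeeping is exactly the step where a naive commensurability shortcut cannot be made to work, and is the reason the original argument needs to be replaced rather than patched locally.
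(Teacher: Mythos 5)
Your three arguments are correct and are essentially the standard ones: restriction of a non-elementary acylindrical action to a finite-index subgroup (powers of independent loxodromics survive), composition with the projection for an extension with finite kernel, and, for the finite quotient $H/N$, the Cayley-graph argument in which one enlarges $X$ by the bounded set $N$ so that the fibres $hN$ have $\d_X$-diameter at most $1$ and $\Gamma(H,X)\to\Gamma(G,\pi(X))$ becomes an equivariant quasi-isometry along which hyperbolicity, acylindricity and non-elementarity all descend --- the same lift-and-count scheme used in the proof of Lemma \ref{AHacc} above. One correction to your closing remark, though it does not affect the validity of your proof: the quotient case is \emph{not} where the original argument of \cite[Lemma 3.9]{MO} fails (it works precisely because the fibres are uniformly $\d_X$-bounded even when $X$ is infinite); the genuine gap concerns the opposite direction, passing from an acylindrically hyperbolic finite-index \emph{subgroup} up to the ambient group, which is Question \ref{q} and is deliberately excluded from the corrected statement.
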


However, we do not know the answer to the following.

\begin{q}\label{q}
Suppose that a group $G$ contains a finite index subgroup which is acylindrically hyperbolic. Does it follow that $G$ is acylindrically hyperbolic itself?
\end{q}

The attempt to give the affirmative answer in the first paragraph of the proof of Lemma~3.9 of \cite{MO} is unsuccessful. Indeed, it claims that if $X$ is a generating set of $H$ and $Y$ is a finite set  of representatives of cosets of $H$ in $G$, then natural map between the Cayley graphs $\Gamma (H, X) \to \Gamma (G, X \cup Y )$ is a quasi-isometry. In general, this is false if $X$ is infinite. The simplest counterexample is $$G=\langle a, b, t\mid t^2=1,\; t^{-1}at=b\rangle ,\;\;\; H=\langle a, b\rangle,\;\;\; X=\{b\}\cup \langle a\rangle .$$ In this case one can take $Y=\{ 1, t\}$. Then for all $n\in \mathbb Z$, we have $|b^n|_X=n$ while $|b^n|_{X\cup Y}= |t^{-1}a^nt|_{X\cup Y}\le 3$

\paragraph{1.2. Necessary corrections in \cite{MO}.} Lemma 3.9 was used in the proof of Theorem 5.6 to deal with non-orientable manifolds. In turn, Theorem 5.6 was used to derive a number of corollaries, including Theorem 2.8 and Corollaries 2.9-2.11 and 7.1-7.3. Note, however, that Lemma 3.9 is not required for dealing with orientable manifolds. Thus we have the following corrected statement of \cite[Theorem 5.6]{MO}:

\begin{thm}\label{thm:new_5.6} Let $M$ be a compact orientable $3$-manifold and let $G$ be a subgroup of $\pi_1(M)$.
Then exactly one of the following three conditions holds.
\begin{itemize}
  \item[{\rm (I)}] $G$ is acylindrically hyperbolic with trivial finite radical.
  \item[{\rm (II)}] $G$ contains an infinite cyclic normal subgroup $Z$ and $G/Z$ is acylindrically
hyperbolic. In fact, $G/Z$ is virtually a subgroup of a surface group in this case.
  \item[{\rm (III)}] $G$ is virtually polycyclic.
\end{itemize}
\end{thm}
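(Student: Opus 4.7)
The plan is to reuse the orientable case of the argument from the original proof of \cite[Theorem 5.6]{MO}, since Lemma 3.9 entered that proof only in the reduction from non-orientable $M$ to its orientable double cover. With the hypothesis now restricted to orientable $M$, no commensurability step is needed and the original argument should go through unchanged. Concretely, the first step is to apply the prime and JSJ decompositions of the compact orientable $3$-manifold $M$, expressing $\pi_1(M)$ as the fundamental group of a finite graph of groups whose vertex groups are fundamental groups of Seifert fibered or hyperbolic pieces and whose edge groups are isomorphic to $\mathbb Z$ or $\mathbb Z^2$. Let $T$ be the associated Bass--Serre tree, on which $G\le\pi_1(M)$ acts by restriction, and argue by cases on the type of this action.

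If $G$ acts non-elementarily on $T$, I would invoke acylindricity of the $\pi_1(M)$-action on $T$ (standard for $3$-manifold splittings along tori) together with the criterion from \cite{Osi16b} for groups acting acylindrically on trees to conclude that $G$ is acylindrically hyperbolic; torsion-freeness of orientable $3$-manifold groups rules out a nontrivial finite radical, producing case (I). If $G$ fixes a vertex, it is contained in a vertex group $V$, and the known structure of subgroups of $V$ applies: a Seifert fibered $V$ carries an infinite cyclic normal fiber with virtually-surface quotient, giving case (II) (or case (III) if $G$ is small enough to miss the non-fiber directions), while a hyperbolic $V$ is relatively hyperbolic with (at most) abelian cusps, and its subgroup trichotomy yields case (I) or case (III). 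A $G$ that fixes a unique end of $T$ without fixing a vertex is virtually polycyclic by a standard Bass--Serre argument applied to the line stabilizer.

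The main technical point I expect to check is the mutual exclusivity of the three conditions: no group in cases (II) or (III) is acylindrically hyperbolic with trivial finite radical, because an acylindrically hyperbolic group with trivial finite radical admits no infinite cyclic normal subgroup (any such subgroup would centralize a finite-index subgroup and therefore lie in the finite radical). Everything else is bookkeeping that faithfully reproduces the orientable part of the argument in \cite{MO}, now unaffected by the gap in Lemma 3.9 because the double-cover reduction is simply dropped.
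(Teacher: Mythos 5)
Your proposal matches the paper's correction exactly: the erratum's entire ``proof'' of this theorem is the observation that Lemma 3.9 entered the original argument for \cite[Theorem 5.6]{MO} only in the passage from a non-orientable manifold to its orientable double cover, so restricting the hypothesis to orientable $M$ lets the original proof stand unchanged. Your reconstruction of that original argument (prime/JSJ decomposition, Bass--Serre tree, case analysis) is consistent with it, modulo the minor caveat that orientable $3$-manifold groups need not be torsion-free in general (e.g.\ spherical space forms), though this does not affect the cases where acylindrical hyperbolicity is actually being established.
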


\emph{Similarly, the proofs of Theorem 2.8 and Corollaries 2.9-2.11, 7.1-7.3 from \cite{MO} remain valid if all the manifolds are assumed to be orientable. 
In particular, we re-define
$\mathcal M^3$ as the class of groups consisting of subgroups of fundamental groups of compact {orientable} $3$-manifolds.}

Another result of \cite{MO} whose proof involves Lemma 3.9 is Corollary 2.16. Its statement should be modified as follows.

\begin{cor} The conditions $G\in \mathcal C_{reg}$,  $G\in \mathcal D_{reg}$, and $G$ is acylindrically hyperbolic are equivalent for any group $G$ from the following classes.
\begin{enumerate}
\item[(a)] Subgroups of fundamental groups of compact orientable $3$-manifolds.
\item[(b)] Subgroups of graph products of amenable groups. In particular, this class includes subgroups of right angled Artin groups.
\end{enumerate}
\end{cor}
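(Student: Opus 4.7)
The plan is to handle the two parts separately. In both, the implication ``$G$ acylindrically hyperbolic $\Rightarrow G\in\mathcal C_{reg}\cap\mathcal D_{reg}$'' was established in \cite{MO} without invoking Lemma~3.9, so the work reduces to proving the reverse implication in each of the two classes.

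For part (a), let $G\le \pi_1(M)$ with $M$ a compact orientable $3$-manifold, and assume $G\in \mathcal C_{reg}\cup \mathcal D_{reg}$. I apply the corrected Theorem~\ref{thm:new_5.6} to obtain the trichotomy (I)--(III). Alternative (III) is excluded because virtually polycyclic groups are amenable, whereas groups in $\mathcal C_{reg}$ or $\mathcal D_{reg}$ are SQ-universal and hence non-amenable. Alternative (II) is excluded because an infinite cyclic normal subgroup is incompatible with the ``regularity'' built into the classes $\mathcal C_{reg}$ and $\mathcal D_{reg}$: acylindrically hyperbolic groups have finite amenable radical, and the subclasses labelled ``reg'' are precisely those in which this radical is trivial (and, more generally, no proper virtually cyclic subgroup is normal). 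Only alternative (I) survives, so $G$ is acylindrically hyperbolic, as required.

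For part (b), I would check by direct inspection of \cite{MO} that the proof of the graph-product statement in the original \cite[Corollary 2.16]{MO} does not use Lemma~3.9. The argument constructs a proper infinite hyperbolically embedded subgroup in $G$ from the Bass--Serre structure of the graph product, together with a regularity argument over the vertex groups; no commensurability step appears. The proof should therefore carry over verbatim, and the statement for subgroups of graph products of amenable groups remains valid.

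The principal obstacle is the exclusion of alternative (II) in part (a), namely the claim that membership in $\mathcal C_{reg}$ or $\mathcal D_{reg}$ is incompatible with the presence of an infinite cyclic normal subgroup. This depends on the precise definitions from \cite{MO} and on the standard structural results for acylindrically hyperbolic groups established in \cite{Osi16b}. If the regularity hypothesis does not directly suffice, the argument can be completed ad hoc using the extra information from Theorem~\ref{thm:new_5.6} that in case (II) the quotient $G/Z$ is virtually a subgroup of a surface group, which gives a concrete acylindrical action on a hyperbolic space and makes the contradiction explicit.
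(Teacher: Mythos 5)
Your overall architecture is the one the erratum intends: the corrected corollary is proved by running the original argument of \cite[Corollary 2.16]{MO} for the group $G$ itself rather than for a group commensurable up to finite kernel with one of type (a) or (b), so that the appeal to Lemma 3.9 (the commensurability step in the third paragraph of that proof) simply disappears; your treatment of (b) and your trichotomy-based treatment of (a) via Theorem~\ref{thm:new_5.6} match that route. The genuine problem is in how you exclude alternatives (II) and (III). You assert that groups in $\mathcal C_{reg}$ or $\mathcal D_{reg}$ are SQ-universal; this is not a known theorem and is essentially circular, since SQ-universality is a \emph{consequence} of acylindrical hyperbolicity, which is exactly what you are trying to establish for these groups. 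Likewise, the subscript ``reg'' has nothing to do with triviality of the amenable radical: it refers to the left regular representation, $\mathcal C_{reg}$ being defined via proper arrays into $\ell^2(G)$ and $\mathcal D_{reg}$ via unbounded quasi-cocycles with values in $\ell^2(G)$. So your description of these classes as ``precisely those in which this radical is trivial'' is not their definition and cannot be invoked as such.

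Both exclusions are salvageable, and this is what \cite{MO} actually does: by the results of Chifan--Sinclair and Thom quoted there, a group lying in $\mathcal C_{reg}$ or in $\mathcal D_{reg}$ cannot contain an infinite amenable normal subgroup. Applied to $G$ itself this rules out alternative (III) for infinite $G$ (the finite case being degenerate), and applied to $Z\cong\mathbb Z$ it rules out alternative (II). Note that your proposed fallback for (II) --- using that $G/Z$ is virtually a subgroup of a surface group to get ``a concrete acylindrical action'' --- does not close the gap: an acylindrical action of the quotient $G/Z$ gives no information about whether $G$ itself admits a proper array or an unbounded quasi-cocycle into $\ell^2(G)$. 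Replace the SQ-universality and ``trivial radical'' claims by the normal-amenable-subgroup obstruction and the argument becomes correct and coincides with the paper's.
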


The proof of Corollary 2.16 provided in \cite{MO} becomes correct after the following modification: instead of proving the result for a  group $K$ which is commensurable up to a finite kernel to a group $H$ of type (a) or (b), we prove the result for $H$ itself. Then the reference to Lemma 3.9 in the third paragraph of the proof can be removed.

\paragraph{1.3. Finite extensions of $\AH$-accessible groups.}
In fact, it is also possible to prove the above mentioned results of \cite{MO} for fundamental groups of closed (but possibly non-orientable) manifolds using tools from \cite{ABO}. Let us briefly recall the necessary notation and terminology introduced in \cite{ABO}. 

Given a group $H$ and generating sets $X$, $Y$ of $H$, we write $X\preceq Y$ if the identity map on $H$ induces a Lipschitz map between the metric spaces $(H, \d_Y)\to (H, \d_X)$, where $\d_X$ and $\d_Y$ denote the corresponding word metrics. Then $\preceq $ is a preorder which induces an equivalence relation and an ordering on the equivalence classes in the standard way. The obtained poset of equivalence classes of generating sets of $H$ is denoted by $\G(H)$. By $\AH(H)$ we denote the subset of $\G (H)$ consisting of equivalence classes $[X]$ such that the Cayley graph $\Gamma (H,X)$ is hyperbolic and the natural action of $H$ on $\Gamma (H,X)$ is acylindrical. The set $\AH(H)$ endowed with the order inherited from $\G (H)$ is called the \emph{poset of acylindrically hyperbolic structures} on $H$.

\begin{defn}
A group $H$ is called $\AH$\emph{-accessible} if the poset $\AH(H)$ contains the largest element.
\end{defn}

It is shown in \cite[Theorem 2.18]{ABO} that there exist finitely presented groups which are not $\AH$-accessible. On the other hand, many groups of geometric origin are $\AH$-accessible; these include fundamental groups of closed orientable $3$-manifolds, see \cite[Theorem 2.19]{ABO}.

We first show that the answer to Question \ref{q} is affirmative in the special case when the finite index subgroup of $G$ is normal and $\AH$-accessible.

\begin{lem}\label{AHacc}
Suppose that a group $G$ contains a normal acylindrically hyperbolic subgroup $H$ of finite index which is $\AH$-accessible. Then $G$ is acylindrically hyperbolic.
\end{lem}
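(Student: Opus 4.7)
The plan is to use $\AH$-accessibility of $H$ to produce a generating set of $H$ that is compatible (up to equivalence) with the conjugation action of $G$, and then extend it to a generating set of $G$ whose Cayley graph realises an acylindrical $G$-action. Let $[X]$ be the largest element of $\AH(H)$. Since $H$ is normal in $G$, for each $g\in G$ the conjugation $\phi_g\colon h\mapsto ghg^{-1}$ is an automorphism of $H$ that induces a graph isometry $\Gamma(H,X)\to\Gamma(H,gXg^{-1})$ intertwining the left multiplication actions of $H$. Hence $[gXg^{-1}]\in\AH(H)$, and maximality forces $[gXg^{-1}]=[X]$ for every $g\in G$.

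Fix a finite set $Y$ of representatives of left cosets of $H$ in $G$ with $1\in Y$, and put $\bar X:=\bigcup_{y\in Y}yXy^{-1}$. The equivalences $[yXy^{-1}]=[X]$ give, for each $y\in Y$, a constant $C_y$ such that $|z|_X\le C_y$ for every $z\in yXy^{-1}$; taking $C=\max_{y\in Y}C_y$, every $\bar X$-word of length $n$ becomes an $X$-word of length at most $Cn$, so $|h|_X\le C\,|h|_{\bar X}$ for all $h\in H$. Combined with $\bar X\supseteq X$, this gives $[\bar X]=[X]$ in $\G(H)$, so $\Gamma(H,\bar X)$ is hyperbolic and the $H$-action on it is acylindrical and non-elementary. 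Now set $\tilde X:=\bar X\cup Y$, a generating set of $G$. The inclusion $\Gamma(H,\bar X)\hookrightarrow\Gamma(G,\tilde X)$ is quasi-dense, since every $g\in G$ lies at $\tilde X$-distance $\le 1$ from $H$ via a $Y$-edge. If this inclusion is moreover a quasi-isometric embedding, then $\Gamma(G,\tilde X)$ is hyperbolic; the $G$-action on it is acylindrical by the standard finite-index argument (the elements $g\in G$ moving two far-apart points by a bounded amount decompose along cosets of $H$, each contributing at most acylindrically many $H$-elements), and non-elementarity transfers from $H$ since loxodromic elements of $H$ remain loxodromic under a quasi-isometric extension.

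The main obstacle is establishing that the inclusion $\Gamma(H,\bar X)\hookrightarrow\Gamma(G,\tilde X)$ is a quasi-isometric embedding. A direct combinatorial attempt---rewriting a $\tilde X$-word $s_1\cdots s_n$ into the form ``$\bar X$-letters followed by $Y$-letters'' using the commutation $y\bar x=(y\bar xy^{-1})y$---can produce exponential blow-up of the $\bar X$-portion, because $y\bar xy^{-1}$ coincides with an element of $\bar X$ only up to conjugation by a bounded $H$-element, and the conjugations $\phi_y$ are only quasi-isometries of $\Gamma(H,\bar X)$ (since $[\phi_y(\bar X)]=[\bar X]$) with constants possibly greater than $1$. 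The right approach should be geometric: view $\Gamma(G,\tilde X)$ as a bundle over the finite group $G/H$ whose fibres are the left $H$-cosets carrying Cayley graph structures isomorphic to $\Gamma(H,\bar X)$, and whose gluings along $Y$-edges are the quasi-isometries $\phi_y$ with uniform constants (by finiteness of $Y$). A standard argument for finite graphs of hyperbolic spaces with uniformly quasi-isometric fibres then yields the global quasi-isometry, completing the proof.
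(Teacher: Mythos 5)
Your opening moves match the paper's: maximality of $[X]$ in $\AH(H)$ forces $[gXg^{-1}]=[X]$ for all $g\in G$, which yields a uniform bound on $|y^{-1}xy|_X$ for $x\in X$, $y\in Y$; and the inclusion of $\Gamma(H,\cdot)$ into $\Gamma(G,\cdot)$ is quasi-dense because $[G:H]<\infty$. But the heart of the lemma is exactly the step you leave open, namely that $|h|_X\le C\,|h|_{X\cup Y}$ for all $h\in H$, and here your proposal has a genuine gap. You correctly diagnose that pushing $Y$-letters through the word one at a time via $y\bar x=(y\bar xy^{-1})y$ blows up (each pass multiplies lengths by a constant, and there can be linearly many passes), but your proposed remedy --- viewing $\Gamma(G,\bar X\cup Y)$ as a ``bundle'' over $G/H$ and invoking ``a standard argument for finite graphs of hyperbolic spaces with uniformly quasi-isometric fibres'' --- is not a proof. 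No such off-the-shelf statement is cited or formulated, and the whole point of this erratum is that the analogous quasi-isometry claim is \emph{false} in general (Section 1.1 gives a counterexample); so any argument must visibly use the conjugation-invariance of $[X]$, and you never convert that invariance into the required linear bound.

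The missing idea is a telescoping decomposition that processes each letter exactly once instead of iterating conjugations. Given a shortest word $a_1\cdots a_n$ over $X^{\pm1}\cup Y^{\pm1}$ representing $h\in H$, choose for each prefix $w_i=a_1\cdots a_i$ a representative $y_i\in Y$ with $w_iy_i\in H$ (so $y_0=y_n=1$), and write
$$h=(y_0^{-1}a_1y_1)(y_1^{-1}a_2y_2)\cdots(y_{n-1}^{-1}a_ny_n).$$
Each factor $h_i=y_{i-1}^{-1}a_iy_i$ lies in $H$ (by normality of $H$ and induction on $i$), and $|h_i|_X$ is uniformly bounded: if $a_i\in X^{\pm1}$ then normality forces $y_{i-1}=y_i$ and the bound is the conjugation bound you already have; if $a_i\in Y^{\pm1}$ then $h_i$ ranges over a finite set. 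This gives $|h|_X\le Cn$ with no exponential loss, which is precisely the quasi-isometric embedding you need; the acylindricity of the $G$-action then follows by intersecting the sets $\Omega_{\e,R}(z)$ with the finitely many cosets $yH$ and transporting the problem into $\Gamma(H,X)$, which you sketch correctly in outline. Without the interleaved decomposition, however, the proof is incomplete at its central step.
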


\begin{proof}
We will show that the original strategy suggested in the proof of Lemma 3.9 in \cite{MO} works in this case. Given a generating set $Z$ of a group, we denote by $|\cdot |_Z$ the corresponding length function. Let us fix a (finite) set of representatives of $H$-cosets in $G$, such that the representative of $H$ is $1$, and denote it by $Y$.

It is easy to show (see Lemma 5.23 in \cite{ABO}) that the formula $\alpha ([X])=[\alpha (X)]$ for all $\alpha \in Aut (H)$ and $[X]\in \AH(H)$ gives a well-defined order preserving action of $Aut(H)$ on $\AH(H)$. From now on, let $[X]\in \AH (H)$ denote the largest element. Since the largest element is unique, $[X]$ is fixed by every automorphism of $H$. In particular, there exists a constant $A$ such that
\begin{equation}\label{A}
|y^{-1}x^{-1}y|_X=|y^{-1}xy|_X\le A
\end{equation}
for all $x\in X$ and all $y\in Y$.

Let us first prove that the natural inclusion of Cayley graphs $\Gamma (H,X)\to \Gamma (G, X\cup Y)$ is an $H$-equivariant quasi-isometry. This inclusion is quasi-surjective since $H$ has finite index in $G$, hence it remains to check that there exists a constant $C>0$ such that
\begin{equation}\label{C}
|h|_X \le C|h|_{X\cup Y}
\end{equation}
for all $h\in H$.

Let $a_1\ldots a_n$ be a shortest word in the alphabet $X^{\pm 1}\cup Y^{\pm 1}$ representing an element $h\in H$ in $G$. For every $i=0, \ldots , n$, there exists $y_i\in Y$ such that $w_iy_i\in H$, where $w_0=1$ and $w_i=a_1\ldots a_i$; of course, we have $y_0=y_n=1$ as $h\in H$. Then we have
$$
h=(y_0^{-1}a_1y_1) (y_1^{-1}a_2y_2) \cdots (y_{n-1}^{-1} a_ny_n).
$$
Let $h_i=y_{i-1}^{-1}a_iy_i$. It is easy to see by induction on $i$ that $h_i\in H$ for all $i$. Thus to prove (\ref{C}) it suffices to show that
\begin{equation}\label{hC}
|h_i|_X \le C
\end{equation}
for all $i$. We consider two cases. First, assume that $a_i\in X^{\pm 1}$. Then $a_i\in H$ and since $H\lhd G$ and $y_{i-1}^{-1}a_iy_i\in H$, we must have $y_{i-1}=y_i$, so the inequality (\ref{hC}) follows from (\ref{A}) for any $C \ge A$. Next, assume that $a_i\in Y^{\pm 1}$. Since $Y$ is finite, and $|h_i|_Y\le 3$ in this case, we can guarantee (\ref{hC}) for the finite set of such elements $h_i$ by taking $C$ large enough.

Thus we have an $H$-equivariant quasi-isometry $\Gamma (H,X)\to \Gamma (G, X\cup Y)$. Since the Cayley graph $\Gamma (H, X)$ is hyperbolic, the same follows for 
$\Gamma (G, X\cup Y)$. It remains to show that the natural action of $G$ on $\Gamma (G, X\cup Y)$ is acylindrical.

Let $\d_{X\cup Y}(\cdot,\cdot)$ and $\d_{X}(\cdot,\cdot)$ denote the standard edge-path metrics on $\Gamma (G, X\cup Y)$ and $\Gamma (H, X)$ respectively.
For any $\varepsilon >0$ we need to find constants $R$ and $N$ such that given any element $z \in G$, with $\d_{X\cup  Y}(1,z) \ge R$, 
the size of the set $$\Omega_{\e,R}(z)=\{g \in G \mid \d_{X\cup Y}(1,g) \le \e \mbox{ and  } \d_{X\cup Y}(z,gz) \le \e\}$$
is bounded by $N$. Since $|Y|<\infty$ it is enough to show that there is $N' \in \mathbb{N}$ such that $|\Omega_{\e,R}(z) \cap yH|\le N'$ for each $y \in Y$.

Consider any $y \in Y$ with $\Omega_{\e,R}(z) \cap yH\neq \emptyset$, and fix some $h_0 \in H$ such that $yh_0 \in \Omega_{\e,R}(z)$.
By the definition of $Y$  there is $w \in H$ such that $\d_{X\cup Y}(z,w) \le 1$, hence 
$\d_{X\cup Y}(gz,gw)\le 1$ for all $g \in G$. Therefore, given any $yh \in \Omega_{\e,R}(z)$, we have
\begin{multline*}
\d_{X\cup Y}(yh_0w,yhw)\le \d_{X\cup Y}(yh_0w,yh_0z)+\d_{X\cup Y}(yh_0z,z)+\d_{X\cup Y}(z,yhz)+\d_{X\cup Y}(yhz,yhw)\\ \le 1+\e+\e+1=2\e+2.
\end{multline*}

Consequently, $\d_{X\cup Y}(w,h_0^{-1}hw) \le 2\e+2$ whenever $yh \in \Omega_{\e,R}(z)$. We also see that in this case
\[\d_{X\cup Y}(1,h_0^{-1}h)=\d_{X\cup Y}(yh_0,yh)\le \d_{X\cup Y}(yh_0,1)+\d_{X\cup Y}(1,yh)\le 2\e.\] 
Now, the inequality \eqref{C} implies that in $\Gamma(H,X)$ we have
\begin{equation}\label{eq:d_X-1}
\d_X(w, h_0^{-1}hw) \le 2C\e+2C  \mbox{ and } \d_X(1, h_0^{-1}h) \le 2C\e \le 2C\e+2C.  
\end{equation}
On the other hand,
\begin{equation}\label{eq:d_X-2}
\d_X(1,w) \ge \d_{X\cup Y}(1,w) \ge \d_{X\cup Y}(1,z)-\d_{X\cup Y}(z,w)\ge R-1.
\end{equation}
Since the action of $H$ on $\Gamma(H,X)$ is acylindrical, there exist $R', N' \in \mathbb{N}$, depending only on $\e'=2C\e+2C $, such that whenever $\d_X(1,w) \ge R'$ the set of elements $h_0^{-1}h \in H$ satisfying \eqref{eq:d_X-1} has size at most $N'$. Since $C$ is a fixed constant, in view of \eqref{eq:d_X-2} 
we can choose $R =R'+1$ to ensure that
$|\Omega_{\e,R}(z) \cap yH|\le N'$, as required.

Therefore the action of $G$ on $\Gamma (G, X\cup Y)$ is acylindrical, thus
$[X\cup Y]\in \AH (G)$. Since $H$ is acylindrically hyperbolic and $[X]\in \AH(H)$ is maximal, the action of $H$ on $\Gamma (H,X)$ is non-elementary by 
\cite[Theorem 1.1]{Osi16a}. Hence so is the action of $G$ on $\Gamma (G, X\cup Y)$. In particular, the group $G$ is acylindrically hyperbolic.
\end{proof}

\paragraph{1.4. Non-orientable closed $3$-manifold groups.}
Lemma \ref{AHacc}, together with the fact that fundamental groups of closed orientable $3$-manifolds are $\AH$-accessible from \cite[Theorem~2.19]{ABO}, imply the following: to show that the fundamental group of a non-orientable closed manifold is acylindrically hyperbolic it suffices to show this for the fundamental group of its orientable double cover. Replacing references to Lemma 3.9 with this observation in the proof of \cite[Theorem 5.6]{MO}, we obtain the following analogue of Theorem \ref{thm:new_5.6} 
for non-orientable manifolds.

\begin{thm}\label{thm:no_5.6} Let  $G$ be the fundamental group of a closed non-orientable $3$-manifold.
Then exactly one of the following three conditions holds.
\begin{itemize}
  \item[{\rm (I)}] $G$ is acylindrically hyperbolic with trivial finite radical.
  \item[{\rm (II)}] $G$ contains an infinite cyclic normal subgroup $Z$ and $G/Z$ is acylindrically
hyperbolic. In fact, $G/Z$ is virtually a subgroup of a surface group in this case.
  \item [{\rm (III)}] $G$ is virtually polycyclic.
\end{itemize}
\end{thm}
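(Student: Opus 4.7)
The plan is to mirror the proof of Theorem \ref{thm:new_5.6}, substituting Lemma \ref{AHacc} for \cite[Lemma 3.9]{MO} at each point where commensurability invariance of acylindrical hyperbolicity was used; the key geometric input is \cite[Theorem 2.19]{ABO}, which asserts that the fundamental group of any closed orientable $3$-manifold is $\AH$-accessible. Concretely, let $\widetilde M\to M$ be the orientable double cover, and set $G=\pi_1(M)$ and $H=\pi_1(\widetilde M)$, so that $H\lhd G$ has index $2$. Theorem \ref{thm:new_5.6} applied to $\widetilde M$ puts $H$ into exactly one of (I), (II), (III), and I intend to transfer the condition from $H$ to $G$ in each case.

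Case (III) is immediate: virtual polycyclicity is closed under finite extensions. Case (I) is handled by Lemma \ref{AHacc} combined with the $\AH$-accessibility of $H$, yielding that $G$ is acylindrically hyperbolic. Triviality of the finite radical of $G$ follows from the observation that $N\cap H$ is a finite normal subgroup of $H$ (hence trivial) for any finite $N\lhd G$, which forces $|N|\le 2$, together with a geometric argument along the lines of \cite{MO} excluding the residual order-$2$ possibility.

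The main obstacle is Case (II). Here $H$ contains an infinite cyclic normal subgroup $Z$ with $H/Z$ acylindrically hyperbolic and virtually a subgroup of a surface group. I would first show that $Z$ is characteristic in $H$: any infinite cyclic normal subgroup $Z'\lhd H$ must map trivially to $H/Z$, because AH groups contain no infinite cyclic normal subgroups (a loxodromic element of such a subgroup would force the whole group to stabilise its pair of boundary endpoints, contradicting the non-elementary acylindrical action). Hence $Z'\subseteq Z$, so $Z$ is the unique maximal infinite cyclic normal subgroup of $H$, and therefore characteristic. Consequently $Z\lhd G$, and $H/Z$ is a normal finite-index subgroup of $G/Z$. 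Since $H/Z$ is AH and virtually a subgroup of a surface group, it is virtually free or virtually a closed surface group, hence hyperbolic, and in particular $\AH$-accessible by \cite{ABO}. Lemma \ref{AHacc} applied to $H/Z\lhd G/Z$ then gives that $G/Z$ is acylindrically hyperbolic, and $G/Z$ remains virtually a subgroup of a surface group because $H/Z$ is.

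Mutual exclusivity of (I), (II), (III) follows as in the orientable setting: amenability separates (III) from (I) and (II), while the absence of infinite cyclic normal subgroups in AH groups separates (I) from (II).
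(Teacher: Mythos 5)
Your top-level idea agrees with the paper's: pass to the orientation double cover $\widetilde M$ and use Lemma \ref{AHacc} together with \cite[Theorem 2.19]{ABO} in place of the flawed \cite[Lemma 3.9]{MO}. But the paper does not transfer the whole trichotomy from $H=\pi_1(\widetilde M)$ to $G$ by pure group theory; it re-runs the topological proof of \cite[Theorem 5.6]{MO} for $M$ itself (so the infinite cyclic normal subgroup in case (II) and the finite-radical statement in case (I) come from the geometry of $M$), invoking the new lemma only at the one point where Lemma 3.9 was used, namely to deduce acylindrical hyperbolicity of $G$ from that of $H$. Your attempt to deduce the statement formally from Theorem \ref{thm:new_5.6} applied to $H$ is a genuinely different route, and it has a gap in case (II).

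The gap is the claim that every infinite cyclic normal subgroup $Z'\lhd H$ ``maps trivially to $H/Z$,'' whence $Z'\subseteq Z$ and $Z$ is characteristic. Acylindrical hyperbolicity of $H/Z$ only forces the image of $Z'$ to be \emph{finite} (an infinite image would be an infinite amenable normal subgroup of an acylindrically hyperbolic group); this gives that $Z'$ is commensurable with $Z$, not that $Z'\subseteq Z$, so $Z$ need not be the unique maximal infinite cyclic normal subgroup and the conclusion $Z\lhd G$ is unjustified. The step can be repaired: the amenable radical $K$ of $H$ is characteristic, contains $Z$ with finite index (its image in $H/Z$ is finite), hence is two-ended; intersecting the finitely many $G$-conjugates of an infinite cyclic finite-index subgroup of $K$ yields an infinite cyclic $Z''\lhd G$ commensurable with $Z$, and Lemma \ref{39} shows $H/Z''$ is still acylindrically hyperbolic --- indeed non-elementary hyperbolic, since $H/Z$ is virtually a finitely generated subgroup of a surface group --- so $G/Z''$ is as well. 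But this is a real argument your proposal skips, and it changes the subgroup $Z$ in the statement. A second, smaller issue: in case (I) your normal-subgroup count leaves open a central order-two subgroup of $G$ meeting $H$ trivially, and the ``geometric argument along the lines of \cite{MO}'' you defer to is precisely the topological input that the paper's proof supplies by redoing \cite[Theorem 5.6]{MO} for $M$ directly.
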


Note that, unlike Theorem \ref{thm:new_5.6}, Theorem \ref{thm:no_5.6} does not cover \emph{subgroups} of fundamental groups of non-orientable $3$-manifolds and also assumes that 
the $3$-manifolds are \emph{closed}. It follows that

\emph{Theorem 2.8, Corollaries 2.9-2.11 and 7.1-7.3, and Corollary 2.16 (a) from \cite{MO} hold for fundamental groups of closed non-orientable manifolds (with $\mathcal{M}^3$
replaced by the class of such fundamental groups).}

\paragraph{1.5. Necessary corrections in \cite{Osi16b}.}
In Theorem 1.6 and Corollary 1.4, the assumption that $G$ virtually surjects onto $\mathbb Z$ should be replaced with the assumption that $G$ itself surjects onto $\mathbb Z$. The conclusion of Corollary 1.2 should be changed to ``then $G$ is virtually acylindrically hyperbolic". Other results remain true as stated.

In addition, the proof of Theorem 1.6 should be modified (and simplified) as follows. Let $G$ be a finitely presented group with $\beta_1^{(2)} (G)>0$ which admits a surjective homomorphism $\e\colon G\to \mathbb Z$. It is shown in the first paragraph of the proof of Lemma 3.1 in \cite{Osi16b} that $G$ splits as an HNN-extension of a finitely generated group $A$ with finitely generated associated subgroups $C$ and $D$ and the stable letter $t$ such that $\e(t)$ is a generator of $\mathbb Z$.

We first observe that $C\ne A\ne D$. Indeed, assume $A=C$ or $A=D$. Let $K_n$ denote the preimage of $n\mathbb Z$ under $\e$. Then $\beta^{(2)}_1(K_n)=n\beta_1^{(2)}(G)\to \infty $ as $n\to \infty$. On the other hand, it is straightforward to check that $K_n$ is generated by $A$ and $t^n$, which yields a uniform upper bound on the number of generators of $G$. Since the first $\ell^2$-Betti number of any group is bounded above by the number of generators minus $1$, we get a contradiction.

Thus $C\ne A\ne D$. By a theorem of Peterson and Thom \cite[Theorem 5.12]{PT}, a countable group with positive first $\ell^2$-Betti number cannot contain finitely generated $s$-normal subgroups of infinite index. In particular, $C$ is not $s$-normal in $G$, i.e., there exists $g\in G$ such that $|g^{-1}Cg\cap C|<\infty$. Now applying Corollary 2.2 from \cite{MO} we conclude that $G$ is acylindrically hyperbolic.

\end{document}